\documentclass[11pt]{article}

\usepackage[margin=1.1in]{geometry}
\usepackage{amsmath,amssymb,amsthm,mathtools}
\usepackage{enumitem}
\usepackage{xcolor}
\usepackage{aliascnt}
\usepackage[colorlinks=true,linkcolor=blue,citecolor=blue,urlcolor=blue]{hyperref}
\numberwithin{equation}{section}

\newtheorem{theorem}{Theorem}[section]
\newaliascnt{proposition}{theorem}
\newtheorem{proposition}[proposition]{Proposition}
\aliascntresetthe{proposition}
\newaliascnt{lemma}{theorem}
\newtheorem{lemma}[lemma]{Lemma}
\aliascntresetthe{lemma}
\newaliascnt{corollary}{theorem}

\aliascntresetthe{corollary}
\newaliascnt{remark}{theorem}
\newtheorem{remark}[remark]{Remark}
\aliascntresetthe{remark}

\usepackage[nameinlink,capitalise]{cleveref}
\crefname{theorem}{Theorem}{Theorems}
\Crefname{theorem}{Theorem}{Theorems}
\crefname{proposition}{Proposition}{Propositions}
\Crefname{proposition}{Proposition}{Propositions}
\crefname{lemma}{Lemma}{Lemmas}
\Crefname{lemma}{Lemma}{Lemmas}
\crefname{corollary}{Corollary}{Corollaries}
\Crefname{corollary}{Corollary}{Corollaries}
\crefname{remark}{Remark}{Remarks}
\Crefname{remark}{Remark}{Remarks}

\newcommand{\dd}{\,d}
\newcommand{\one}{\mathbf{1}}
\newcommand{\supp}{\operatorname{supp}}

\title{{\Large\bf Physical-Space Scarring in Generic Bunimovich Stadia}}
\author{Heng~Li\thanks{School of Mathematics, Shandong University, Jinan, China, and Extremal Combinatorics and Probability Group, Institute for Basic Science, Daejeon, South Korea. Email: \texttt{heng.li@sdu.edu.cn}.}
\and
Xizhi~Liu\thanks{School of Mathematical Sciences, University of Science and Technology of China, Hefei, China. Email:~\texttt{liuxizhi@ustc.edu.cn}.}}
\date{\today}

\begin{document}
\maketitle

\begin{abstract}
For the family of Dirichlet stadia $S_t$ whose rectangular part has height $\pi$ and half-length $\pi t/2$, $t \in [1,2]$, we show that for Lebesgue almost every $t$ there exist real eigenfunctions $u_j$ and a smooth mean-zero physical observable $Q$ for which $\langle Q u_j,u_j\rangle$ has a non-zero subsequential limit.  Consequently, along the same subsequence, the eigenfunction mass fails to equidistribute on a fixed region whose relative boundary in the interior of the stadium is smooth.  This proves a physical-space strengthening of Hassell's non-QUE theorem for generic stadia, and thus gives an affirmative answer to Tao's question in Hassell's generic setting.

The proof uses the classification of generic stadia in Hassell's argument.  In each of the resulting cases, we construct an appropriate physical observable $Q$ that converts Hassell's phase-space obstruction to QUE into physical-space non-equidistribution.
\end{abstract}

\section{Introduction}

For every real number $t \in [1,2]$, let $S_t$ be the Bunimovich stadium obtained by adjoining two semicircular caps of radius $\pi/2$ to the rectangle $R_t\coloneqq I_t\times(-\pi/2,\pi/2)$, where $I_t\coloneqq(-\pi t/2,\pi t/2)$. 
Let
\[
H_t\coloneqq-\Delta^D_{S_t}
\]
be the Dirichlet Laplacian.  We use the sign convention $H_t u=\lambda u$, $\lambda>0$.  By smooth functions on $\overline S_t$ we mean restrictions to $\overline S_t$ of smooth functions defined in an open neighbourhood of $\overline S_t$.
By a physical observable we mean a multiplication observable on configuration space: a function $Q=Q(z)$ on $\overline S_t$, depending only on the position $z$ and not on a cotangent direction.  Thus a continuous, respectively smooth, physical observable is simply an element of $C(\overline S_t)$, respectively $C^\infty(\overline S_t)$.

A normalized eigenfunction sequence $u_j$ is \emph{physically equidistributed} if, for every continuous physical observable $Q$ on $\overline S_t$,
\[
 \int_{S_t} Q(z)|u_j(z)|^2\dd z
 \to
 \frac1{|S_t|}\int_{S_t} Q(z)\dd z.
\]
Here the convergence is as $j\to\infty$; equivalently, the probability measures $|u_j(z)|^2\dd z$ converge weakly to the normalized Lebesgue measure $|S_t|^{-1}\dd z$ when tested against continuous physical observables.
It is \emph{physically scarred} if this convergence fails for at least one fixed physical observable.  For regions whose boundary has area zero, the corresponding mass formulation is equivalent; in the theorem below the witnessing region will have smooth relative boundary in the interior of the stadium.  In that region formulation, ``smooth boundary'' means smooth relative boundary in the interior of the billiard domain; possible intersections with the billiard boundary are irrelevant for area measure.

The stadium is a central example in quantum chaos: its billiard flow is ergodic, as shown by Bunimovich \cite{Bunimovich1979}, but it has singular invariant measures carried by the vertical bouncing-ball trajectories.  The quantum ergodicity theorem of Shnirelman, Colin de Verdi{\`e}re, and Zelditch \cite{Shnirelman1974,ColinDeVerdiere1985,Zelditch1987}, together with its billiard versions due to G{\'e}rard and Leichtnam \cite{GerardLeichtnam1993} and to Zelditch and Zworski \cite{ZelditchZworski1996}, gives equidistribution for a density-one subsequence of eigenfunctions.  Boundary versions and related billiard refinements were developed by Hassell and Zelditch \cite{HassellZelditch2004}.  The stronger QUE problem asks whether exceptional subsequences can be excluded; see, for example, Rudnick and Sarnak's formulation in negative curvature \cite{RudnickSarnak1994}.  In the stadium, the bouncing-ball quasimodes behind the physical and numerical scarring picture go back to Heller's work \cite{Heller1984} and to the localization mechanism of O'Connor and Heller \cite{OConnorHeller1988}; see also Zelditch's note on the associated quasimode mechanism \cite{Zelditch2004}.  Rigorous results on how stadium eigenfunctions and quasimodes must spread into the wings were obtained by Burq and Zworski \cite{BurqZworski2005} and by Burq, Hassell, and Wunsch \cite{BurqHassellWunsch2007}, while numerical studies of rates of quantum ergodicity in Euclidean billiards, including the stadium, appear in B{\"a}cker, Schubert, and Stifter \cite{BackerSchubertStifter1998}.

Tao formulated the physical-space scarring problem for the Bunimovich stadium as an open question \cite{Tao2007}.  Hassell subsequently proved that, for Lebesgue almost every member of the standard one-parameter family of stadia, the Dirichlet eigenfunctions are not quantum uniquely ergodic \cite{Hassell2010}.  His conclusion is microlocal: there is a high-energy subsequence whose semiclassical measures in phase space are not the Liouville measure.  The obstruction comes from the bouncing-ball dynamics in the rectangular part of the stadium and is extracted from bounded spectral windows around the corresponding bouncing-ball quasimodes.  Tao noted that upgrading this phase-space conclusion to physical-space non-equidistribution remained a natural direction \cite{Tao2008}.  The point of the present note is that Hassell's bounded-window output already contains enough information to make this physical-space extraction.

\begin{theorem}\label{thm:main}
For Lebesgue almost every $t\in[1,2]$ there exist normalized real Dirichlet eigenfunctions
\[
H_t u_j=\lambda_j u_j,\qquad \|u_j\|_{L^2(S_t)}=1,\qquad\text{and}\qquad \lambda_j\to\infty,
\]
and a real-valued $Q\in C^\infty(\overline S_t)$ such that $\int_{S_t} Q(z)\dd z=0$ and, after passing to a subsequence,
\[
\int_{S_t} Q(z)|u_j(z)|^2\dd z\to \gamma\neq 0.
\]
Consequently, along the same subsequence, there is a physical region $A\subseteq S_t$, whose relative boundary in the interior of the stadium may be chosen smooth, such that
\[
 \int_A |u_j(z)|^2\dd z \not\to \frac{|A|}{|S_t|}.
\]
\end{theorem}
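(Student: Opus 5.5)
We will not reprove Hassell's theorem. Instead we reopen his argument \cite{Hassell2010} and keep track of what it says about the eigenfunctions themselves, not only about their semiclassical measures. Recall the structure of that argument. For the bouncing-ball quasimodes $v_n(z)=\chi(x)\sin(n(y+\pi/2))$, with $\chi$ a cutoff in $x$ supported in $I_t$ and $n$ of fixed parity, one has $\|(H_t-n^2)v_n\|=O(1)$. Hence the spectral projector of $H_t$ onto a window $[n^2-K,n^2+K]$ captures a fixed positive fraction of $v_n$.

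Hassell then splits into two cases, depending on whether the number of eigenvalues in these windows stays bounded along a subsequence. For almost every $t$, he uses a variation-of-$t$ (Hadamard) argument to show that the windows cannot always contain many eigenvalues whose eigenfunctions are spread out. The conclusion in either case is that some eigenfunction $u_j$ in a window carries a non-vanishing amount of mass, at least $c>0$, of its semiclassical measure on the bouncing-ball set $\{\xi=0\}$ over the rectangle. Equivalently, the overlap $|\langle u_j,v_n\rangle|$ is at least $c'>0$. We take this bounded-window output as the input to our proof.

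The key step converts phase-space concentration on bouncing balls into a physical statement. We choose $Q(z)=q(x)$ with $q$ smooth, compactly supported in $I_t$, and independent of $y$, then subtract a constant multiple of a bump supported in a cap so that $\int_{S_t}Q=0$. Since $Q$ depends only on position, no microlocal localization is needed: along any subsequence, $\int Q|u_j|^2\to\int Q\,d\mu$, where $\mu$ is the projection to physical space of a semiclassical measure. Write $\mu=\alpha\mu_{bb}+\mu'$, where $\mu_{bb}$ is the bouncing-ball part. An invariant measure supported on $\{\xi=0\}$ projects to $dx\,dy$ restricted to the rectangle, weighted by an $x$-density $\rho(x)\ge0$.

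In the case where Hassell's measure is a combination $\alpha\mu_{bb}+(1-\alpha)\mu_L$ with Liouville $\mu_L$, we get
\[
\int Q\,d\mu=\alpha\int q\rho\,dx\,dy+(1-\alpha)|S_t|^{-1}\int Q .
\]
The second term vanishes because $\int Q=0$. It then suffices to choose $q$ with $\int q\rho\neq0$, while keeping the compensating cap bump away from the rectangle. For example, take $q\ge0$ and nonzero where $\rho>0$, and put the negative compensation in the caps, where $\mu_{bb}$ puts no mass. This gives $\gamma=\alpha\int q\rho\neq0$.

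The main obstacle is that $\mu'$ need not be Liouville. It is an arbitrary invariant measure, and its projection could cancel the contribution of $\mu_{bb}$. This is exactly where we use Hassell's case classification. In the case where the windows contain boundedly many eigenvalues, we replace single eigenfunctions by real combinations within a window. More precisely, we pass to an eigenfunction whose overlap with $v_n$ has limit mass close to $1$, so that $\alpha$ is close to $1$ and $|\int Q\,d\mu'|\le(1-\alpha)\|Q\|_\infty$ is smaller than $\alpha\int q\rho$. If needed, we sharpen this by taking $q$ supported where $\rho$ is large relative to its average.

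In the other case, where the windows contain many eigenvalues, we use the Hadamard variation argument. For almost every $t$ it forces the derivative $\partial_t\lambda_j$, which is a boundary integral comparable to $\int_{R_t}|\partial_x u_j|^2$-type quantities, to deviate from its ergodic value. We convert this into a statement that the $x$-mass distribution of $|u_j|^2$ over the rectangle differs from uniform, and then choose $q$ accordingly.

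Finally, the region statement follows from standard approximation. Given $\gamma\neq0$, some superlevel set $A=\{Q>s\}$, with $s$ a regular value (Sard), has smooth relative boundary of measure zero. Writing $Q=\int\mathbf 1_{\{Q>s\}}\,ds$ plus a constant, the layer-cake formula shows that if $\int_A|u_j|^2\to|A|/|S_t|$ held for all such $A$, dominated convergence would give $\int Q|u_j|^2\to0$, a contradiction.
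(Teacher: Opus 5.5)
\textbf{There is a genuine gap.} Your final layer-cake/Sard step is fine and matches the paper. The core of the argument, however, is not supported.

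\textbf{The bounded-window case.} Everything rests on making $\alpha$ close to $1$, so that $\mu'$ cannot cancel the bouncing-ball contribution, and nothing supplies this. Hassell's counting only yields, for each admissible $n$, some eigenfunction with $|\langle u_j,v_{n_j}\rangle|\geq\sqrt{3/(4M_t)}$. Here $M_t$ bounds the number of eigenvalues in the window and may be large. ``Real combinations within a window'' of eigenfunctions with different eigenvalues are not eigenfunctions, so the theorem says nothing about them. The model $\alpha\mu_{bb}+(1-\alpha)\mu_L$ is also not something Hassell proves. With only $\alpha\gtrsim c_t^2$, your observable ($q(x)$ minus a cap bump) is fully exposed to the cancellation you point out, since $\mu'$ may put mass over the caps.

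The missing idea is to drop the cap compensation altogether. Take $Q=q(x)$ supported in the rectangle with $\int_{I_t}q=0$, and expand $u_j$ exactly in vertical modes, $a_{j,\ell}(x)=\int u_j(x,y)\varphi_\ell(y)\,dy$. These solve $-a_{j,\ell}''=(\lambda_j-\ell^2)a_{j,\ell}$ on $I_t$.
\begin{itemize}
\item For $\ell<n_j$ the horizontal frequency is at least $c\sqrt{n_j}$. So $a_{j,\ell}^2$ is a constant plus an oscillation, and it contributes $O(n_j^{-1/2})\|a_{j,\ell}\|^2$ against a mean-zero $q$.
\item For $\ell>n_j$ the coefficients are evanescent and exponentially small on $\supp q$.
\item Hence the non-target modes drop out no matter how much mass they carry.
\item The target mode has bounded coefficient $\lambda_j-n_j^2$, so its normalized profile converges in $C^1$ to an ODE solution $b_\infty$. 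The condition $\int b_\infty\chi\neq0$, together with $\int\chi=0$, forces $b_\infty$, and hence $b_\infty^2$, to be non-constant. Then choose a mean-zero $q$ with $\int q\,b_\infty^2\neq0$.
\end{itemize}
Knowing only that the semiclassical measure charges the bouncing-ball set does not prevent its $x$-marginal from being uniform. What produces a visible, non-semiclassical $x$-profile is the exact overlap with a mean-zero $\chi$ inside an $O(1)$ spectral window.

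\textbf{The other case.} This is not yet an argument, and the dichotomy is misdescribed. In Hassell's proof, the Hadamard variation formula is what produces the bounded windows, for a.e.\ $t$ with $\liminf_j f_j(t)>0$. It is not applied to a ``many eigenvalues'' case, and the overlap with $v_n$ is not available in both cases.

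In the complementary case $\liminf_j f_j(t)=0$, one gets a boundary measure vanishing on the curved arcs. With the Mangoubi--Weller Weiser clarification at the four curvature-jump points, this gives an interior semiclassical measure with no mass over the cap interiors. The physical detector there is a nonnegative bump $Q_0$ in the caps minus its mean, whose expectation tends to $-|S_t|^{-1}\int Q_0\neq0$. The $x$-profile over the rectangle may well be uniform in that case, so ``the $x$-mass distribution differs from uniform'' is not something you can expect to prove.

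You also need to pass to the real or imaginary part of the eigenfunctions there to obtain real ones. Since $Q_0\geq0$, this costs at most a factor $2$ in $\langle Q_0u_j,u_j\rangle$.
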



The proof keeps Hassell's argument as a black box and uses only two consequences of his non-QUE theorem for generic stadia \cite{Hassell2010}.  For the stadium family $S_t$, $t\in[1,2]$, Hassell's proof classifies the parameter interval $[1,2]$ into two alternatives, denoted here by $Z_1$ and $Z_2$: the first produces a semiclassical-measure obstruction, while the second produces a bounded-window obstruction near bouncing-ball quasimodes.  In the $Z_1$ alternative, the limiting mass avoids the caps in the microlocal sense needed here, so a physical observable supported in the cap interiors detects non-equidistribution.  In the complementary $Z_2$ alternative, Hassell's bounded-window argument gives eigenfunctions with a fixed overlap with a bouncing-ball quasimode; the local Fourier analysis in the central rectangle then converts this overlap into a smooth mean-zero physical observable with non-zero limiting expectation.  Finally, a layer-cake argument turns such an observable into a physical region whose mass is not equidistributed.

The organization is as follows.  \Cref{prop:hassell-input} records the two Hassell consequences in the form used later.  The one-dimensional estimates needed for the local extraction are proved next, followed by the fixed-overlap extraction in \Cref{prop:fixed-overlap}.  We then pass from smooth observables to regions and assemble the proof of the theorem.  The $Z_1$ semiclassical-measure alternative uses the later clarification of Mangoubi and Weller Weiser \cite{MangoubiWeller2024} concerning singular boundary points in Hassell's stadium argument; the bounded-window extraction is purely local to the central rectangle and does not involve that boundary issue.

\section{Hassell's classification}

This section isolates the part of Hassell's argument that will be used in the
rest of the paper.  We do not revisit the proof of the generic non-QUE theorem;
instead, we record its two outputs in a form adapted to the present physical-space
extraction.  We first fix a normalized family of bouncing-ball quasimodes,
uniformly for the stadia $S_t$, and then state the resulting dichotomy for the
parameter interval.

Set
\[
\varphi_n(y)\coloneqq\sqrt{\frac2\pi}\,\sin\bigl(n(y+\pi/2)\bigr)
\quad\text{for}\quad y\in(-\pi/2,\pi/2).
\]
Then $-\partial_y^2\varphi_n=n^2\varphi_n$ and $\varphi_n(\pm\pi/2)=0$.  Fix once and for all a real-valued cutoff
\[
\chi\in C_c^\infty((-\pi/4,\pi/4)),\qquad
\|\chi\|_{L^2(\mathbb R)}=1,
\qquad\text{and}\qquad
\int_{\mathbb R}\chi(x)\dd x=0.
\]
For every $t\in[1,2]$, define
\begin{equation}\label{eq:bouncing-ball-quasimode}
v_n(x,y)\coloneqq\chi(x)\varphi_n(y)
\end{equation}
in the central rectangle and extend it by zero to $S_t$.  The support of $\chi$ is a positive distance away from the two interfaces with the caps, and $\varphi_n$ satisfies the Dirichlet condition on the horizontal sides.  Hence this extension belongs to the domain of the Dirichlet Laplacian and
\[
\|v_n\|_{L^2(S_t)}=1,
\qquad\text{and}\qquad
(H_t-n^2)v_n=(-\chi''(x))\varphi_n(y).
\]
Put $K_\chi\coloneqq\|\chi''\|_{L^2(\mathbb R)}$. 
Then
\begin{equation}\label{eq:quasimode-error}
\|(H_t-n^2)v_n\|_{L^2(S_t)}=K_\chi
\end{equation}
uniformly in $n$ and $t$.  Notice that $K_\chi>0$: otherwise $\chi$ would be affine on its support and hence, by compact support, identically zero, contradicting $\|\chi\|_2=1$.

Here and below, $A^\circ$ denotes the interior of a set $A$, and
$C_c^\infty(U)$ denotes the space of smooth functions on $U$ with compact
support contained in $U$.  We write
\[
        S^*S_t^\circ\coloneqq
        \{(z,\zeta):z\in S_t^\circ,\ \zeta\in T_z^*S_t^\circ,\ |\zeta|=1\}
\]
for the unit cotangent bundle over the interior of the stadium.  The
superscript $*$ denotes the cotangent, or dual, object: for
$z\in S_t^\circ$, the notation $T_z^*S_t^\circ$ denotes the cotangent
space of $S_t^\circ$ at $z$, while $S^*S_t^\circ$ denotes the corresponding
unit cotangent bundle.  Since $S_t^\circ\subseteq\mathbb R^2$ is an open
planar domain, $T_z^*S_t^\circ$ is naturally identified with
$(\mathbb R^2)^*$, and hence $S^*S_t^\circ$ may be identified with
$S_t^\circ\times S^1$.

Hassell's proof of Theorem 4 in \cite{Hassell2010} splits the parameter set into two parts, denoted here by $Z_1$ and $Z_2$; see his equation (9).  The following proposition is an extracted corollary of that proof, specialized to the simple stadium family above and to the fixed cutoff $\chi$.

\begin{proposition}\label{prop:hassell-input}
Let $Z_1,Z_2$ be the partition of $[1,2]$ arising in Hassell's proof for the Dirichlet stadium family, where $Z_1$ is the set defined by Hassell's condition $\liminf_{j\to\infty} f_j(t)=0$, and $Z_2\coloneqq[1,2]\setminus Z_1$.  With $v_n$ defined by \eqref{eq:bouncing-ball-quasimode}, the following assertions hold.
\begin{enumerate}[label=\textnormal{(\alph*)}]
\item For every $t\in Z_1$, write $\mathcal C_t^\circ\coloneqq S_t^\circ\setminus \overline{R_t}$ for the union of the interiors of the two caps, and let
\[
        \operatorname{pr}:S^*S_t^\circ\to S_t^\circ,\qquad
        \operatorname{pr}(z,\zeta)=z
\]
be the base projection.  There exist normalized Dirichlet eigenfunctions $w_j$ and eigenvalues $\lambda_j$ such that
\[
        H_t w_j=\lambda_j w_j,\qquad
        \|w_j\|_{L^2(S_t)}=1,\qquad
        \lambda_j\to\infty,
\]
and an associated interior semiclassical measure $\mu$ satisfying
\begin{equation}\label{eq:z1-cap-avoidance}
        \mu\bigl(\operatorname{pr}^{-1}(\mathcal C_t^\circ)\bigr)=0.
\end{equation}
In particular, for every $Q_0\in C_c^\infty\bigl(S_t^\circ\setminus \overline{R_t}\bigr)$, one has, after passing to the subsequence defining $\mu$,
\[
        \langle Q_0 w_j,w_j\rangle_{L^2(S_t)}\to 0.
\]
In Hassell's formulation, modulo the standard gliding and singular boundary sets, the corresponding quantum limit is supported on the bouncing-ball covectors in the rectangular part.

\item For Lebesgue almost every $t\in Z_2$ there exist integers $n_j\to\infty$, normalized real Dirichlet eigenfunctions $u_j$, and constants $C_t,c_t>0$ such that
\[
        H_t u_j=\lambda_j u_j,\qquad
        |\lambda_j-n_j^2|\leq C_t,\qquad\text{and}\qquad
        |\langle u_j,v_{n_j}\rangle_{L^2(S_t)}|\geq c_t
        \quad\text{for every}\quad j.
\]
\end{enumerate}
\end{proposition}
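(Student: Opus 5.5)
We do not reprove Hassell's theorem. The plan is to go through his proof of Theorem~4 in \cite{Hassell2010} and record what each branch of his dichotomy (his equation (9)) already gives. We then translate those outputs into the normalisation fixed above: the cutoff $\chi$ with $\int\chi=0$, and the quasimodes $v_n$ of \eqref{eq:bouncing-ball-quasimode}. Hassell works with functions $f_j(t)$ that measure how much spectral mass of $H_t$ lies in windows $[n_j^2-C,n_j^2+C]$ around bouncing-ball quasimode energies. $Z_1$ is the set where $\liminf_j f_j(t)=0$, and $Z_2$ is its complement.

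\textbf{Part (b).} Fix $t\in Z_2$ outside the Lebesgue-null exceptional set that Hassell discards. His argument then gives a subsequence $n_j\to\infty$, a window width $C_t$, and a lower bound $c'_t>0$ for the window count or spectral mass. By \eqref{eq:quasimode-error}, with the constant $K_\chi$ uniform in $n$ and $t$, the spectral theorem puts most of the mass of $v_n$ in a bounded window:
\[
\|\one_{|H_t-n^2|>C}v_n\|\le K_\chi/C .
\]
So for fixed large $C$ at least half of $\|v_{n_j}\|^2$ sits in $[n_j^2-C,n_j^2+C]$. The essential input from Hassell on $Z_2$ is that the number of eigenvalues in this window stays bounded, say by $N_t$, along the subsequence. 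Expand the window projection of $v_{n_j}$ in a real orthonormal eigenbasis, which exists because $H_t$ is real. By pigeonhole, some eigenfunction $u_j$ in the window satisfies
\[
|\langle u_j,v_{n_j}\rangle|^2\ge \frac{1}{2N_t},
\]
so we may take $c_t=(2N_t)^{-1/2}$.

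Hassell uses his own cutoff and his own parametrisation. Changing the cutoff to our $\chi$ costs only a $t$-dependent constant, because every such $v_n$ is an $O(1)$-quasimode at the same energy $n^2$.

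\textbf{Part (a).} For $t\in Z_1$, the condition $\liminf_j f_j(t)=0$ means that along a subsequence the windows carry asymptotically no non-bouncing-ball mass. Hassell's argument then produces eigenfunctions $w_j$ whose quantum limit $\mu$ gives positive weight $\ge$ some constant to bouncing-ball covectors. In fact it concentrates on the invariant set of vertical trajectories over $R_t$, together with gliding and singular boundary sets. We invoke \cite{MangoubiWeller2024} to handle the singular boundary points that Hassell's original argument treated loosely. After that correction, the interior measure restricted over the open caps vanishes, which is \eqref{eq:z1-cap-avoidance}.

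The consequence for $Q_0\in C_c^\infty(S_t^\circ\setminus\overline{R_t})$ then follows from the standard fact about semiclassical measures. Multiplication by $Q_0$ is a zeroth-order interior pseudodifferential operator with principal symbol $Q_0(z)$, so
\[
\langle Q_0w_j,w_j\rangle\to\int Q_0\circ\operatorname{pr}\,d\mu=0 .
\]

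\textbf{Main obstacle.} The hardest step is part (a): stating exactly what support property of $\mu$ Hassell's $Z_1$ branch actually yields, rather than just non-Liouville. Our plan is to extract from his proof that $\mu$ is a limit of measures supported on bouncing-ball covectors, then apply the Mangoubi--Weller Weiser clarification, and finally verify that none of the boundary exceptional sets project into the open caps.
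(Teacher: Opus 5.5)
The spectral step in your (b) matches the paper: quasimode error $K_\chi$, spectral theorem, a real eigenbasis of the window, and pigeonhole. Your symbol argument for $Q_0$ in (a) also matches. The gap is in what you take from Hassell, and it comes from misreading $f_j$. Hassell's $f_j(t)$ does not measure spectral mass in windows; its index $j$ is the eigenvalue index. Up to normalisation it is the logarithmic eigenvalue velocity $-E_j'(t)/E_j(t)$ from the Hadamard variational formula. Concretely, it is an $E_j^{-1}$-normalised integral of $(\partial_\nu u_j)^2$ over the curved arcs, weighted by the normal velocity of the translated caps, which is positive except at the four junction points.

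So in (a), ``the windows carry asymptotically no non-bouncing-ball mass'' gives no route to \eqref{eq:z1-cap-avoidance}. Your plan to show that $\mu$ is a limit of measures on bouncing-ball covectors is not what the $Z_1$ branch provides. The paper uses the following chain:
\begin{enumerate}[label=(\roman*)]
\item $f_{j_k}(t)\to0$ makes the boundary semiclassical measure vanish on the open curved arcs.
\item The boundary--interior relation then places the interior limit on rays that do not meet those arcs.
\item Every ray through a point of the open caps does meet them transversally; the curvature-jump points are handled by \cite{MangoubiWeller2024}.
\item Inner regularity then gives \eqref{eq:z1-cap-avoidance}.
\end{enumerate}

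In (b) the same misreading makes you state the input backwards. $Z_2$ supplies no ``window width $C_t$'' and no lower bound on window counts. What $\liminf_j f_j(t)>0$ gives, via the variation formula and Weyl's law, is an integrated upper bound
\[
\int_{G_\varepsilon}\#\{\lambda\in\operatorname{Spec}(H_t):|\lambda-n^2|\le a\}\dd t\le C_\varepsilon a,
\]
valid for every width $a$, with $C_\varepsilon$ independent of $a$ and $n$.

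Turning this into your pointwise statement (infinitely many $n$ with at most $N_t$ eigenvalues in the window) is the main content of the paper's proof, and it is missing from yours. The steps are:
\begin{enumerate}[label=(\roman*)]
\item Fix the width first, from $\chi$: your $C\ge\sqrt2K_\chi$, or the paper's $a_\chi=2K_\chi$.
\item Apply Chebyshev to get sets $A_n\subseteq G_\varepsilon$ with $|A_n|\ge|Z_2|-3\varepsilon$ on which the count is at most $\lceil C_\varepsilon a/\varepsilon\rceil$.
\item Pass to $\limsup_n A_n$, which has measure at least $|Z_2|-4\varepsilon$.
\item Take the union over $\varepsilon_m\downarrow0$.
\end{enumerate}
The order of quantifiers matters. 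The count bound and the discarded null set both depend on the width, and your spectral step needs a width of order $K_\chi$. So you cannot fix $t$ first and let Hassell's argument supply the width; the width must be fixed from $\chi$ before any null set is discarded. Once this is in place, your remark about changing cutoffs is unnecessary. The count statement involves no cutoff, and only $K_\chi$ enters, which is independent of $t$.
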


\begin{remark}\label{rem:z1-cap-avoidance}
The condition \eqref{eq:z1-cap-avoidance} says that the limiting microlocal mass of this subsequence does not lie over the interiors of the two semicircular caps.  The map $\operatorname{pr}$ forgets the covector direction and remembers only the base point $z$, so $\operatorname{pr}^{-1}(\mathcal C_t^\circ)$ is the part of phase space consisting of all unit covectors based at points inside the two caps.  Hence any physical observable supported compactly in the cap interiors has limiting expectation zero along the subsequence defining $\mu$.
\end{remark}

\begin{proof}[Proof of \Cref{prop:hassell-input}]
We first prove the $Z_1$ statement in the precise form needed later.
Hassell's $Z_1$ argument gives a boundary semiclassical measure that
vanishes on the curved sides of the stadium.  The relation between boundary
measures and interior quantum limits then gives an interior quantum limit
supported on rays that do not meet the curved sides.  For the Bunimovich
stadium, the nonsingular interior rays with this property are the
bouncing-ball rays in the rectangular part.  At the four curvature-jump
points of the stadium boundary, we use the clarification of Mangoubi and
Weller Weiser \cite{MangoubiWeller2024}; it shows that Hassell's use of
the non-gliding-point argument still gives the conclusion needed here: the
interior measure obtained in the $Z_1$ alternative gives no mass to compact
subsets of the phase-space region lying over the cap interiors.  By inner
regularity, this gives
\[
        \mu\bigl(\operatorname{pr}^{-1}(\mathcal C_t^\circ)\bigr)=0.
\]
Hence, for any
$Q_0\in C_c^\infty(S_t^\circ\setminus \overline{R_t})$, multiplication by
$Q_0$ is an interior zeroth-order observable with principal symbol
$Q_0\circ\operatorname{pr}$.  Since $Q_0\circ\operatorname{pr}$ vanishes on $\operatorname{supp}\mu$,
\[
        \langle Q_0 w_j,w_j\rangle_{L^2(S_t)}
        \to
        \int_{S^*S_t^\circ} Q_0(\operatorname{pr}(z,\zeta))\,\dd\mu(z,\zeta)
        =0.
\]
This proves part \textnormal{(a)}.

We turn to part \textnormal{(b)}.  Recall that $\|(H_t-n^2)v_n\|_{L^2(S_t)}=K_\chi$ uniformly in $n$ and $t$.  Set $a_\chi\coloneqq2K_\chi>0$.  We shall apply Hassell's bounded-window argument with this fixed half-width $a\coloneqq a_\chi$.

We first record explicitly the consequence of Hassell's parameter estimate
that we need.  For fixed $a>0$ and $\varepsilon>0$, there is a measurable
set $F_{\varepsilon,a}\subseteq Z_2$ such that
$|Z_2\setminus F_{\varepsilon,a}|\leq 4\varepsilon$, and there is a finite integer $M_{\varepsilon,a}$ with the following
property: for every $t\in F_{\varepsilon,a}$ there are infinitely many
integers $n$ such that the closed spectral window $[n^2-a,n^2+a]$
contains at most $M_{\varepsilon,a}$ eigenvalues of $H_t$, counted with
multiplicity.

Indeed, this is precisely the content of Hassell's equations (10)--(15),
but let us spell out the extraction.  Since $t\in Z_2$ means
$\liminf_j f_j(t)>0$, Hassell first chooses, for each $\varepsilon>0$, a
set $G_\varepsilon\subseteq Z_2$ with
$|G_\varepsilon|\geq |Z_2|-2\varepsilon$, together with constants $c_\varepsilon>0$ and $N_\varepsilon$, such that
\[
        t\in G_\varepsilon,\quad j\geq N_\varepsilon
        \quad\Longrightarrow\quad
        f_j(t)\geq \frac{c_\varepsilon}{2}.
\]
Using the eigenvalue variation formula and Weyl bounds, Hassell obtains a
constant $C_\varepsilon>0$, independent of $a$ and $n$, such that, for all
sufficiently large $n$,
\[
        \int_{G_\varepsilon}
        \#\{\lambda\in \operatorname{Spec}(H_t):|\lambda-n^2|\leq a\}\,\dd t
        \leq
        C_\varepsilon a.
\]
Define $M_{\varepsilon,a} \coloneqq \left\lceil {C_\varepsilon a}/{\varepsilon} \right\rceil$. 
By Chebyshev's inequality, for all sufficiently large $n$ the set
\[
        A_n
        \coloneqq
        \left\{
        t\in G_\varepsilon:
        \#\{\lambda\in \operatorname{Spec}(H_t):|\lambda-n^2|\leq a\}
        \leq M_{\varepsilon,a}
        \right\}
\]
satisfies
\[
        |A_n|\geq |G_\varepsilon|-\varepsilon
        \geq |Z_2|-3\varepsilon.
\]
It remains to pass from large measure for each $A_n$ to membership in
infinitely many $A_n$'s.  For all sufficiently large integers $k$, set
\[
        D_k
        \coloneqq
        \left\{
        t\in Z_2:
        t\in A_n\ \text{for at least $k$ distinct integers } n
        \text{ satisfying } k\leq n<5k
        \right\}.
\]
Then
\[
        \sum_{n=k}^{5k-1}|A_n|
        \geq
        4k(|Z_2|-3\varepsilon).
\]
On the other hand, by the definition of $D_k$, a point of $D_k$ can be
counted at most $4k$ times in this sum, while a point of
$Z_2\setminus D_k$ can be counted at most $k$ times.  Hence
\[
        \sum_{n=k}^{5k-1}|A_n|
        \leq
        4k|D_k|+k(|Z_2|-|D_k|).
\]
Combining the last two estimates gives $|D_k|\geq |Z_2|-4\varepsilon$.
Since $D_k\subseteq \bigcup_{n\geq k}A_n$, we get, for all sufficiently large $k$,
$\left|\bigcup_{n\geq k}A_n\right|\geq |Z_2|-4\varepsilon$.
By continuity of measure from above,
\[
        \left|
        \limsup_{n\to\infty} A_n
        \right|
        =
        \left|
        \bigcap_{k=1}^\infty \bigcup_{n\geq k}A_n
        \right|
        \geq |Z_2|-4\varepsilon.
\]
Taking
\[
        F_{\varepsilon,a}\coloneqq\limsup_{n\to\infty}A_n
\]
proves the bounded-window consequence stated above.

Now choose a sequence $\varepsilon_m\downarrow0$, for instance $\varepsilon_m\coloneqq2^{-m}$ and define
\[
        Z_2^{\mathrm{good}}
        \coloneqq
        \bigcup_{m=1}^\infty F_{\varepsilon_m,a_\chi}.
\]
Since $|Z_2\setminus F_{\varepsilon_m,a_\chi}|\leq 4\varepsilon_m$ for every $m$, we have
\[
        |Z_2\setminus Z_2^{\mathrm{good}}|
        =
        \left|
        \bigcap_{m=1}^\infty
        (Z_2\setminus F_{\varepsilon_m,a_\chi})
        \right|
        \leq
        \inf_m 4\varepsilon_m
        =0.
\]
Thus $Z_2^{\mathrm{good}}$ has full measure in $Z_2$.

Fix $t\in Z_2^{\mathrm{good}}$.  Then $t\in F_{\varepsilon_m,a_\chi}$ for
some $m$.  Let $M_t\coloneqq M_{\varepsilon_m,a_\chi}$.
There are infinitely many integers $n$ such that
\[
        \#\{\lambda\in\operatorname{Spec}(H_t):|\lambda-n^2|\leq a_\chi\}
        \leq M_t.
\]
Choose such an $n$.  Let $P_n\coloneqq\one_{(n^2-a_\chi,n^2+a_\chi)}(H_t)$ be the spectral projection onto the open window.  Its rank is at most
$M_t$, since the open window is contained in the closed window counted
above.  On the complementary spectral subspace,
$|\lambda-n^2|\geq a_\chi$.
Therefore, by the spectral theorem and the quasimode estimate,
\[
        a_\chi\|(I-P_n)v_n\|_{L^2(S_t)}
        \leq
        \|(H_t-n^2)(I-P_n)v_n\|_{L^2(S_t)}
        \leq
        \|(H_t-n^2)v_n\|_{L^2(S_t)}
        =
        K_\chi.
\]
Since $a_\chi=2K_\chi$, this gives
$\|(I-P_n)v_n\|_{L^2(S_t)}\leq \frac12$, and hence
\[
        \|P_n v_n\|_{L^2(S_t)}^2
        =
        1-\|(I-P_n)v_n\|_{L^2(S_t)}^2
        \geq
        \frac34.
\]
Let $\{u_{n,k}\}_{k=1}^{r_n}$ be an orthonormal real eigenbasis of $\operatorname{ran}P_n$.  Such a real
basis may be chosen because $H_t$ has real coefficients and the Dirichlet
condition is real.  Since $r_n\leq M_t$,
\[
        \sum_{k=1}^{r_n}
        |\langle u_{n,k},v_n\rangle_{L^2(S_t)}|^2
        =
        \|P_n v_n\|_{L^2(S_t)}^2
        \geq
        \frac34.
\]
Consequently, for at least one $k$,
\[
        |\langle u_{n,k},v_n\rangle_{L^2(S_t)}|
        \geq
        \sqrt{\frac{3}{4M_t}}.
\]
Selecting one such eigenfunction for each of the infinitely many admissible
integers $n$, and relabelling them as $u_j$ with $n=n_j\to\infty$, we
obtain
\[
        H_t u_j=\lambda_j u_j,\qquad
        |\lambda_j-n_j^2|<a_\chi,\qquad\text{and}\qquad
        |\langle u_j,v_{n_j}\rangle_{L^2(S_t)}|
        \geq
        \sqrt{\frac{3}{4M_t}}.
\]
Thus part \textnormal{(b)} holds with, for example,
\[
        C_t\coloneqq a_\chi=2K_\chi,
        \qquad\text{and}\qquad
        c_t\coloneqq\sqrt{\frac{3}{4M_t}}.
\]
The proof is complete.
\end{proof}


\section{One-dimensional lemmas in the rectangle}

We use three elementary facts about solutions of one-dimensional constant-coefficient equations.  They are stated with constants uniform over the compact parameter ranges needed later.  The notation $J\Subset I$ means that the closure of $J$ is a compact subset of $I$.

\begin{lemma}\label{lem:compact}
Let $T>0$, let $I\coloneqq(-T,T)$, and let $\mu_j$ be a bounded sequence of real numbers.  If
\[
-b_j''=\mu_j b_j,
\qquad\text{and}\qquad
\|b_j\|_{L^2(I)}=1,
\]
then a subsequence converges in $C^1(\overline I)$ and in $L^2(I)$ to a non-zero solution $b_\infty$ of
\[
-b_\infty''=\mu_\infty b_\infty
\]
for some subsequential limit $\mu_\infty$.
\end{lemma}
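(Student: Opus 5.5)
The idea is to reduce everything to a two-dimensional space of initial data. Every solution of $-b''=\mu b$ on $I$ is smooth and determined by $(b(0),b'(0))$:
\[
b(x)=b(0)\,c_\mu(x)+b'(0)\,s_\mu(x),
\]
where $c_\mu$ and $s_\mu$ are the fundamental solutions with $c_\mu(0)=1$, $c_\mu'(0)=0$, $s_\mu(0)=0$ and $s_\mu'(0)=1$. These are $\cos(\sqrt\mu x)$ and $\sin(\sqrt\mu x)/\sqrt\mu$, with the obvious hyperbolic and linear versions for $\mu\le 0$. They are entire functions of $\mu$. Hence $(\mu,x)\mapsto (c_\mu,s_\mu,c_\mu',s_\mu')$ is jointly continuous on $[-B,B]\times\overline I$, where $B\ge\sup_j|\mu_j|$. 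It follows that the maps $\mu\mapsto c_\mu$ and $\mu\mapsto s_\mu$ are continuous into $C^1(\overline I)$, uniformly on this compact range.

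The first step is to show that the data $(\alpha_j,\beta_j)\coloneqq(b_j(0),b_j'(0))$ are bounded. Consider the Gram form
\[
G_\mu(\alpha,\beta)\coloneqq\|\alpha c_\mu+\beta s_\mu\|_{L^2(I)}^2 .
\]
For each $\mu$ it is positive definite, because $c_\mu$ and $s_\mu$ are linearly independent: their Wronskian at $0$ equals $1$. The form depends continuously on $\mu$. Its minimum over the compact set $[-B,B]\times\{\alpha^2+\beta^2=1\}$ is therefore some $\kappa>0$. The normalization $\|b_j\|_{L^2}=1$ then gives $\alpha_j^2+\beta_j^2\le 1/\kappa$.

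Next I extract convergent subsequences. By Bolzano--Weierstrass, pass to a subsequence along which $\mu_j\to\mu_\infty$, $\alpha_j\to\alpha_\infty$ and $\beta_j\to\beta_\infty$. Set
\[
b_\infty\coloneqq\alpha_\infty c_{\mu_\infty}+\beta_\infty s_{\mu_\infty}.
\]
By the continuity noted above, $b_j\to b_\infty$ in $C^1(\overline I)$. Since $I$ is bounded, this also gives convergence in $L^2(I)$. The function $b_\infty$ solves $-b_\infty''=\mu_\infty b_\infty$ by construction. Moreover $\|b_\infty\|_{L^2(I)}=\lim_j\|b_j\|_{L^2(I)}=1$, so $b_\infty\neq0$.

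The only step that needs care is the uniform lower bound $\kappa>0$. It is what prevents the initial data from blowing up while the $L^2$ norm stays fixed. The compactness-plus-continuity argument above settles it. An alternative route is to derive $C^2$ bounds for $b_j$ directly from the equation and apply Arzel\`a--Ascoli, but the explicit fundamental-solution parametrization is cleaner.
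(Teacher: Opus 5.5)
Your proof is correct and takes essentially the paper's approach: you bound the initial data $(b_j(0),b_j'(0))$ using the Gram form $G_\mu$, which is uniformly positive definite for $|\mu|\le B$ by linear independence, continuity in $\mu$, and compactness. The only difference is the last step, where you get $C^1(\overline I)$ convergence from continuity of the fundamental solutions $c_\mu,s_\mu$ in $\mu$ (which also exhibits $b_\infty$ explicitly as a solution), whereas the paper bounds $b_j,b_j',b_j''$ and applies Arzel\`a--Ascoli.
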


\begin{proof}
After passing to a subsequence, assume $\mu_j\to\mu_\infty$.  It remains to obtain compactness.  Choose $C$ with $|\mu_j|\leq C$.  For each $|\mu|\leq C$ and each initial vector $\xi=(\xi_0,\xi_1)\in\mathbb R^2$, let $B_{\mu,\xi}$ be the solution of
\[
-B''=\mu B,
\qquad
B(0)=\xi_0,
\qquad\text{and}\qquad B'(0)=\xi_1.
\]
The quadratic form
\[
G_\mu(\xi)\coloneqq\|B_{\mu,\xi}\|_{L^2(I)}^2
\]
is positive definite for every $\mu$, because a non-zero solution of a second-order ODE cannot vanish identically on an interval.  Since $\mu\mapsto G_\mu$ is continuous on the compact interval $[-C,C]$, there is $c_0>0$ such that
\[
G_\mu(\xi)\geq c_0|\xi|^2
\quad\text{for}\quad |\mu|\leq C.
\]
Thus the initial data $(b_j(0),b_j'(0))$ are uniformly bounded.  The ODE then gives uniform bounds for $b_j$, $b_j'$, and $b_j''$ on $\overline I$.  Arzel{\`a}--Ascoli gives a subsequence converging in $C^1(\overline I)$ to a solution of the limiting equation.  The $L^2$ norm of the limit is one, so the limit is non-zero.
\end{proof}

\begin{lemma}\label{lem:oscillation}
Let $T>0$, let $I\coloneqq(-T,T)$, and let $q\in C_c^\infty(I)$ with $\int_I q(x)\dd x=0$.  There is a constant $C_q$, depending on $q$ and $I$, such that, for every real number $k\geq1$ and every real solution of
\[
-a''=k^2 a,
\]
one has
\[
\left|\int_I q(x)|a(x)|^2\dd x\right|
\leq \frac{C_q}{k}\|a\|_{L^2(I)}^2.
\]
\end{lemma}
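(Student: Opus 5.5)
We write every real solution as $a(x)=\alpha\cos(kx)+\beta\sin(kx)$ with $\alpha,\beta\in\mathbb R$, expand $|a|^2$, and use the mean-zero property of $q$ to remove the non-oscillating part. The product-to-sum formulas give
\[
|a(x)|^2=\frac{\alpha^2+\beta^2}{2}+\frac{\alpha^2-\beta^2}{2}\cos(2kx)+\alpha\beta\sin(2kx).
\]
Since $\int_I q=0$, the constant term contributes nothing. Hence
\[
\int_I q|a|^2\dd x=\frac{\alpha^2-\beta^2}{2}\int_I q(x)\cos(2kx)\dd x+\alpha\beta\int_I q(x)\sin(2kx)\dd x .
\]

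First, we bound the two oscillatory integrals. Because $q\in C_c^\infty(I)$, one integration by parts has no boundary terms and gives
\[
\left|\int_I q(x)e^{\pm2ikx}\dd x\right|\le \frac{\|q'\|_{L^1(I)}}{2k}.
\]
The right-hand side of the expansion is therefore at most $\|q'\|_{L^1}(\alpha^2+\beta^2)/(2k)$, up to an absolute factor.

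The main step is to compare $\alpha^2+\beta^2$ with $\|a\|_{L^2(I)}^2$, uniformly in $k\ge1$. Compute
\[
\|a\|_{L^2(I)}^2=\int_{-T}^{T}|a|^2\dd x=(\alpha^2+\beta^2)T+\frac{\alpha^2-\beta^2}{2}\cdot\frac{\sin(2kT)}{k},
\]
where the cross term $\alpha\beta\int_{-T}^T\sin(2kx)\dd x$ vanishes by oddness. The second term has absolute value at most $(\alpha^2+\beta^2)/(2k)$. Thus
\[
\|a\|_{L^2(I)}^2\ge(\alpha^2+\beta^2)\Bigl(T-\frac1{2k}\Bigr).
\]
For $k\ge k_0\coloneqq\max(1,1/T)$ this gives $\|a\|_{L^2}^2\ge(\alpha^2+\beta^2)T/2$.

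For the compact range $1\le k\le k_0$, we use the argument of \Cref{lem:compact}. The Gram form $(\alpha,\beta)\mapsto\|\alpha\cos(kx)+\beta\sin(kx)\|_{L^2(I)}^2$ is positive definite for each $k$, because $\cos(kx)$ and $\sin(kx)$ are linearly independent on $I$. It also depends continuously on $k$. Hence there is $c_0>0$ with $\|a\|_{L^2}^2\ge c_0(\alpha^2+\beta^2)$ for all $k\in[1,k_0]$.

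Combining the two ranges gives $\alpha^2+\beta^2\le C_I\|a\|_{L^2(I)}^2$ for all $k\ge1$. Inserting this into the oscillatory bound yields the lemma with $C_q\coloneqq C_I\|q'\|_{L^1(I)}$, up to an absolute factor. The only point needing care is this uniform lower bound on the Gram form, particularly for small $k$ relative to $1/T$; the rest is routine.
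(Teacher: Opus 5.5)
Your proposal is correct and follows essentially the same route as the paper: the same expansion of $|a|^2$, the same single integration by parts against $e^{\pm 2ikx}$, and the same two-range comparison of $\alpha^2+\beta^2$ with $\|a\|_{L^2(I)}^2$, using compactness for bounded $k$ and the Gram matrix ($T$ times the identity plus $O(k^{-1})$) for large $k$. Your explicit formula $\|a\|_{L^2(I)}^2=(\alpha^2+\beta^2)T+\frac{\alpha^2-\beta^2}{2}\cdot\frac{\sin(2kT)}{k}$ makes the paper's large-$k$ step concrete, including the explicit threshold $k_0=\max(1,1/T)$.
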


\begin{proof}
Fix $k\geq1$ and write $a(x)=A\cos(kx)+B\sin(kx)$.
This is the general real solution of the constant-coefficient equation
$a''+k^2a=0$.
Then
\[
|a(x)|^2
=\frac{A^2+B^2}{2}
+\frac{A^2-B^2}{2}\cos(2kx)
+AB\sin(2kx).
\]
The constant term integrates to zero against $q$.  One integration by parts gives
\[
\left|\int_I q(x)e^{2ikx}\dd x\right|
\leq \frac{\|q'\|_{L^1(I)}}{2k}.
\]
Therefore
\[
\left|\int_I q(x)|a(x)|^2\dd x\right|
\leq \frac{C_q'}{k}(A^2+B^2).
\]
It remains to compare $A^2+B^2$ with $\|a\|_2^2$ uniformly for $k\geq 1$.  For $1\leq k\leq K$, set
\[
m_K\coloneqq\min_{\substack{1\leq k\leq K\\ A^2+B^2=1}}
\|A\cos(kx)+B\sin(kx)\|_{L^2(I)}^2.
\]
The parameter set is compact, and no non-zero function $A\cos(kx)+B\sin(kx)$ can vanish identically on $I$, so $m_K>0$.  This gives the comparison for $1\leq k\leq K$.  For $k\geq K$ it follows directly from the identities for the integrals of $\cos^2(kx)$, $\sin^2(kx)$, and $\sin(kx)\cos(kx)$: the corresponding Gram matrix is $T$ times the identity plus an $O(k^{-1})$ matrix.  Taking $K$ large enough gives $A^2+B^2\leq C_I\|a\|_{L^2(I)}^2$ for all $k\geq1$, and the lemma follows.
\end{proof}

\begin{lemma}\label{lem:evanescent}
Let $T>0$, let $I\coloneqq(-T,T)$, and let $J\Subset I$.  There are constants $C,c>0$, depending only on $I$ and $J$, such that, for every real number $\alpha\geq1$ and every real solution of
\[
a''=\alpha^2 a,
\]
one has
\[
\|a\|_{L^2(J)}\leq Ce^{-c\alpha}\|a\|_{L^2(I)}.
\]
\end{lemma}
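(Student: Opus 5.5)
The plan is to write down the general real solution explicitly and compare its size on $J$ with its size on all of $I$ by elementary estimates. Every real solution of $a''=\alpha^2 a$ has the form $a(x)=A e^{\alpha x}+Be^{-\alpha x}$ with $A,B\in\mathbb R$. Choose $T'<T$ with $J\subseteq[-T',T']$. The key observation is that on $J$ both exponentials are at most $e^{\alpha T'}$, while on the outer strips of $I$ each exponential reaches size about $e^{\alpha T}$. So it suffices to prove
\[
\|a\|_{L^2(I)}^2\gtrsim \frac{e^{2\alpha T}}{\alpha}(A^2+B^2)
\]
for $\alpha\geq1$, up to constants, and to compare this with the trivial bound $\|a\|_{L^2(J)}^2\leq 4T\,e^{2\alpha T'}(A^2+B^2)$.

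\emph{Step 1 (upper bound on $J$).} For $|x|\leq T'$ we have $|a(x)|\leq(|A|+|B|)e^{\alpha T'}$. Squaring and integrating over $J$ gives $\|a\|_{L^2(J)}^2\leq 2|J|(A^2+B^2)e^{2\alpha T'}$.

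\emph{Step 2 (lower bound on $I$).} Set $\delta\coloneqq(T-T')/2$ and consider the two end strips $E_+\coloneqq(T-\delta,T)$ and $E_-\coloneqq(-T,-T+\delta)$. By symmetry, assume $|A|\geq|B|$. On $E_+$ we have
\[
|Ae^{\alpha x}|\geq |A|e^{\alpha(T-\delta)}\qquad\text{and}\qquad |Be^{-\alpha x}|\leq |A|e^{-\alpha(T-\delta)}.
\]
Since $T-\delta>0$, for $\alpha$ beyond a threshold $\alpha_0$ depending only on $T$ and $\delta$ the first term dominates: $|a|\geq\tfrac12|A|e^{\alpha x}$ on $E_+$. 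Hence
\[
\|a\|_{L^2(E_+)}^2\geq\frac{A^2}{4}\int_{T-\delta}^{T}e^{2\alpha x}\dd x\geq c\,A^2e^{2\alpha(T-\delta/2)},
\]
where I integrate only over the last half of the strip, which avoids any loss from a factor $1/\alpha$. Using $A^2\geq (A^2+B^2)/2$, this gives $\|a\|_{L^2(I)}^2\geq c'(A^2+B^2)e^{2\alpha(T-\delta/2)}$. Dividing the bound of Step 1 by this one yields
\[
\|a\|_{L^2(J)}^2\leq C e^{-2\alpha(T-\delta/2-T')}\|a\|_{L^2(I)}^2,
\]
with exponent $T-\delta/2-T'=3\delta/2>0$. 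This is the required estimate with $c=3\delta/2$.

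\emph{Step 3 (small $\alpha$).} It remains to handle $1\leq\alpha\leq\alpha_0$. Here I would argue by compactness exactly as in \Cref{lem:oscillation}. The map $(\alpha,A,B)\mapsto\|Ae^{\alpha x}+Be^{-\alpha x}\|_{L^2(I)}^2$ is continuous and strictly positive on the compact set $[1,\alpha_0]\times\{A^2+B^2=1\}$, since $e^{\pm\alpha x}$ are linearly independent. So it is bounded below by some $m>0$, while $\|a\|_{L^2(J)}^2\leq C(\alpha_0)$ on the same set. Enlarging $C$ so that $Ce^{-c\alpha_0}$ covers this ratio finishes the proof.

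The main obstacle is the dominance argument in Step 2. One must keep the threshold $\alpha_0$ and all constants dependent only on $T$ and $\delta$, and not on $A$ or $B$, so that cancellation between the two exponentials cannot spoil the lower bound. Splitting according to which of $|A|$ and $|B|$ is larger, and using the end strip on the corresponding side, is what handles this.
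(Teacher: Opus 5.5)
Your proof is correct and follows essentially the paper's argument: write the solution in the exponential basis, bound $\|a\|_{L^2(J)}$ above by the coefficients times $e^{\alpha T'}$, bound $\|a\|_{L^2(I)}$ below in terms of the coefficients, and handle bounded $\alpha$ by compactness. The one difference is the lower bound. The paper proves exactly the bound $\|a\|_{L^2(I)}^2\gtrsim \alpha^{-1}e^{2\alpha T}(A^2+B^2)$ announced in your first paragraph, using the near-diagonal Gram matrix of $e^{\alpha(x-T)}$ and $e^{-\alpha(x+T)}$, and then absorbs the factor $\alpha$ into the exponential. Your end-strip dominance argument instead proves the weaker bound $e^{2\alpha(T-\delta/2)}(A^2+B^2)$, which still suffices because $T-\delta/2-T'>0$.
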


\begin{proof}
Fix $\alpha\geq1$.  Choose $\delta>0$ such that $J\subseteq[-T+\delta,T-\delta]$.  Every solution has the form $a(x)=P_+ e^{\alpha(x-T)}+P_- e^{-\alpha(x+T)}$.
On $J$ both exponentials are $O(e^{-\alpha\delta})$, and hence
\begin{equation}\label{eq:interior-upper}
\|a\|_{L^2(J)}^2\leq C_J e^{-2\alpha\delta}(|P_+|^2+|P_-|^2).
\end{equation}
On $I$, let
\[
f_1(x)\coloneqq e^{\alpha(x-T)},\qquad\text{and}\qquad f_2(x)\coloneqq e^{-\alpha(x+T)}.
\]
The Gram matrix of $f_1$ and $f_2$ has diagonal entries
\[
\langle f_1,f_1\rangle=\langle f_2,f_2\rangle
=\frac{1-e^{-4\alpha T}}{2\alpha}
\]
and off-diagonal entry $\langle f_1,f_2\rangle=2T e^{-2\alpha T}$.
The diagonal entries are comparable to $\alpha^{-1}$ for $\alpha\geq1$, while the off-diagonal entry is exponentially small.  For large $\alpha$ this gives a lower bound of order $\alpha^{-1}$ for the least eigenvalue, and the remaining bounded range of $\alpha$ is handled by compactness and linear independence of $f_1,f_2$.  Hence, after decreasing the constant if necessary,
\begin{equation}\label{eq:gram-lower}
\|a\|_{L^2(I)}^2\geq c_0\alpha^{-1}(|P_+|^2+|P_-|^2),
\quad\text{for}\quad \alpha\geq1.
\end{equation}
Combining \eqref{eq:interior-upper} and \eqref{eq:gram-lower} gives $\|a\|_{L^2(J)}^2\leq C\alpha e^{-2\alpha\delta}\|a\|_{L^2(I)}^2$.  Choose $0<c<2\delta$.  Since $\alpha e^{-(2\delta-c)\alpha}$ is bounded for $\alpha\geq1$, we obtain $\|a\|_{L^2(J)}^2\leq C e^{-c\alpha}\|a\|_{L^2(I)}^2$.
Replacing $c$ by $c/2$ and adjusting $C$ gives the stated norm estimate.
\end{proof}

\section{Physical extraction from a fixed vertical overlap}

This section is the elementary core of the paper.  It is independent of Hassell's parameter argument.

\begin{proposition}\label{prop:fixed-overlap}
Fix $t\in[1,2]$.  Let $u_j$ be normalized real Dirichlet eigenfunctions on $S_t$ and let $n_j\to\infty$ be integers such that
\[
H_t u_j=\lambda_j u_j,\qquad
|\lambda_j-n_j^2|\leq C,\qquad\text{and}\qquad
|\langle u_j,\chi\varphi_{n_j}\rangle_{L^2(S_t)}|\geq c
\]
for some constants $C,c>0$.
Then, after passing to a subsequence, there exists a real-valued $Q\in C^\infty(\overline S_t)$ with $\int_{S_t} Q(z)\dd z=0$ such that
\[
\langle Q u_j,u_j\rangle_{L^2(S_t)}\to \gamma\neq 0.
\]
\end{proposition}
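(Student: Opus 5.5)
The plan is to separate variables in the central rectangle $R_t=I_t\times(-\pi/2,\pi/2)$, isolate the single resonant vertical mode $m=n_j$, and test against an observable depending only on $x$. Write
\[
u_j(x,y)=\sum_{m\geq1} b_{j,m}(x)\varphi_m(y)\quad\text{on } R_t,\qquad b_{j,m}(x)\coloneqq\int_{-\pi/2}^{\pi/2}u_j(x,y)\varphi_m(y)\dd y .
\]
Because $\{\varphi_m\}$ is an orthonormal Dirichlet eigenbasis in $y$ and $H_tu_j=\lambda_ju_j$ in $R_t$, each coefficient satisfies
\[
-b_{j,m}''=(\lambda_j-m^2)\,b_{j,m}\quad\text{on } I_t,
\qquad
\sum_m\|b_{j,m}\|_{L^2(I_t)}^2=\|u_j\|_{L^2(R_t)}^2\leq1 .
\]

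For the observable, take $Q(x,y)\coloneqq q(x)$, where $q\in C_c^\infty(J)$ for some interval $J\Subset I_t$, $q$ is real, and $\int q=0$. Extend $Q$ by zero to the caps. Then $Q\in C^\infty(\overline S_t)$ and $\int_{S_t}Q=\pi\int q=0$. Orthonormality in $y$ gives
\[
\langle Qu_j,u_j\rangle=\sum_m\int_{I_t}q(x)|b_{j,m}(x)|^2\dd x .
\]

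Next, bound the non-resonant modes $m\neq n_j$. For large $j$, $|\lambda_j-n_j^2|\leq C$ and $|m^2-n_j^2|\geq 2n_j-1$ together give $|\lambda_j-m^2|\geq n_j$. There are two cases.
\begin{itemize}
\item If $\lambda_j-m^2=k^2$ with $k\geq\sqrt{n_j}$, \Cref{lem:oscillation} bounds the term by $C_qn_j^{-1/2}\|b_{j,m}\|^2$.
\item If $m^2-\lambda_j=\alpha^2$ with $\alpha\geq\sqrt{n_j}$, \Cref{lem:evanescent} (with $J$ and $I=I_t$) bounds it by $\|q\|_\infty C e^{-2c\sqrt{n_j}}\|b_{j,m}\|^2$.
\end{itemize}
Since both constants are uniform in $k$ and $\alpha$, summing over $m\neq n_j$ gives a total $O(n_j^{-1/2})\to0$. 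Hence $\langle Qu_j,u_j\rangle=\int q|b_{j,n_j}|^2+o(1)$.

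For the resonant mode, $\mu_j\coloneqq\lambda_j-n_j^2$ is bounded. Since $\supp\chi\subset I_t$,
\[
\langle u_j,\chi\varphi_{n_j}\rangle=\int\chi\, b_{j,n_j}\dd x,
\]
so the norms satisfy $c\leq\|b_{j,n_j}\|_{L^2(I_t)}\leq1$. Normalise $b_{j,n_j}$ and apply \Cref{lem:compact}. After passing to a subsequence, $b_{j,n_j}\to b_\infty$ in $C^1(\overline{I_t})$, where $b_\infty\neq0$ solves $-b_\infty''=\mu_\infty b_\infty$ and $|\int\chi b_\infty|\geq c$. Therefore $\langle Qu_j,u_j\rangle\to\gamma\coloneqq\int q|b_\infty|^2$.

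The key remaining point, which I expect to be the real obstacle, is choosing $q$ with $\gamma\neq0$. It suffices that $|b_\infty|^2$ is not constant on $J$. Indeed, if $\int q|b_\infty|^2=0$ for all mean-zero $q\in C_c^\infty(J)$, then $|b_\infty|^2$ is constant on $J$. So I would argue that $|b_\infty|^2$ cannot be constant on an interval. Differentiating twice gives $b_\infty'^2=\mu_\infty b_\infty^2$, and together with the ODE this forces either $b_\infty$ constant with $\mu_\infty=0$, or $b_\infty\equiv0$. The first case (for which I would take $J\supset\supp\chi$) is excluded by the hypothesis $\int\chi=0$: it would give $\int\chi b_\infty=0$, contradicting $|\int\chi b_\infty|\geq c$. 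The second case contradicts $b_\infty\neq0$, because a solution of the ODE vanishing on an interval vanishes identically. So some mean-zero $q$ gives $\gamma\neq0$.
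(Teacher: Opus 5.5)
Your proposal is correct and follows essentially the same route as the paper's proof. Both use the vertical Fourier decomposition in $R_t$ and an $x$-only mean-zero detector $Q=q(x)$. Both kill the non-resonant modes with \Cref{lem:oscillation} and \Cref{lem:evanescent}, handle the resonant coefficient with \Cref{lem:compact}, and use $\int\chi=0$ to exclude a constant limit, so that $b_\infty^2$ is non-constant and some mean-zero $q$ detects it. The one step to state explicitly is passing to a further subsequence along which $\|b_{j,n_j}\|_{L^2(I_t)}$ converges to some $A_\infty\in[c,1]$ (the paper's $A_j\to A_\infty$). Then the unnormalized coefficients themselves converge to a limit of norm $A_\infty\geq c$, and the limit $\gamma$ is as you claim.
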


\begin{proof}
All arguments in this proof are local to the rectangle $R_t$.  For $\ell\geq1$, define the vertical Fourier coefficient
\[
a_{j,\ell}(x)\coloneqq\int_{-\pi/2}^{\pi/2}u_j(x,y)\varphi_\ell(y)\dd y
\quad\text{for}\quad x\in I_t.
\]
Since $u_j$ and $\varphi_\ell$ are real-valued, all $a_{j,\ell}$ are real-valued.  Since $u_j$ solves $(-\partial_x^2-\partial_y^2)u_j=\lambda_j u_j$ in $R_t$ and satisfies the Dirichlet condition on the horizontal sides $y=\pm\pi/2$, testing the equation against functions of the form $\eta(x)\varphi_\ell(y)$, with $\eta\in C_c^\infty(I_t)$, gives, in the distributional sense on $I_t$,
\[
-a_{j,\ell}''(x)+\ell^2 a_{j,\ell}(x)=\lambda_j a_{j,\ell}(x).
\]
Equivalently,
\begin{equation}\label{eq:coeff-ode}
-a_{j,\ell}''=(\lambda_j-\ell^2) a_{j,\ell}
\qquad\text{on }I_t.
\end{equation}
Because the right-hand side belongs to $L^2(I_t)$, the coefficient belongs to $H^2_{\mathrm{loc}}(I_t)$, meaning that it has two weak derivatives in $L^2$ on every compact subinterval of $I_t$, and the constant-coefficient ODE then implies smoothness on $I_t$.  No boundary condition at the endpoints of $I_t$ is used; the caps affect how the coefficients match to the exterior of the central rectangle.

By Parseval in the vertical variable,
\begin{equation}\label{eq:parseval}
\sum_{\ell\geq1}\|a_{j,\ell}\|_{L^2(I_t)}^2
=\|u_j\|_{L^2(R_t)}^2
\leq 1.
\end{equation}
The overlap assumption is exactly
\begin{equation}\label{eq:overlap-coeff}
\left|\int_{I_t}a_{j,n_j}(x)\chi(x)\dd x\right|
\geq c.
\end{equation}
Let $A_j\coloneqq\|a_{j,n_j}\|_{L^2(I_t)}$. 
Since $\|\chi\|_2=1$, \eqref{eq:overlap-coeff} and Cauchy's inequality give $c\leq A_j\leq1$.
Passing to a subsequence, assume
\[
A_j\to A_\infty\in[c,1],
\qquad\text{and}\qquad
\mu_j\coloneqq\lambda_j-n_j^2\to\mu_\infty.
\]
Set $b_j\coloneqq A_j^{-1}a_{j,n_j}$. 
Then
\[
\|b_j\|_{L^2(I_t)}=1,
\qquad\text{and}\qquad
-b_j''=\mu_j b_j.
\]
By \Cref{lem:compact}, after passing to a further subsequence, $b_j\to b_\infty$ in $C^1(\overline I_t)$ and in $L^2(I_t)$, where $\|b_\infty\|_2=1$ and $-b_\infty''=\mu_\infty b_\infty$.  Moreover, the $L^2$ convergence gives
\[
\left|\int_{I_t}b_\infty(x)\chi(x)\dd x\right|
=\lim_{j\to\infty}A_j^{-1}
\left|\int_{I_t}a_{j,n_j}(x)\chi(x)\dd x\right|
\geq c.
\]
Since $\int_{\mathbb R}\chi(x)\dd x=0$, the limit $b_\infty$ is not constant.  The functions are real-valued.  A nonconstant real solution of a constant-coefficient second-order ODE cannot have constant square on the connected interval $I_t$: if $b_\infty^2$ were constant, then $b_\infty$ would have constant absolute value, hence by continuity a constant sign and therefore would itself be constant.  Thus $b_\infty^2$ is not constant.

We now choose a detector in the horizontal variable.  There exists a real-valued $q\in C_c^\infty(I_t)$ such that
\begin{equation}\label{eq:q-choice}
\int_{I_t}q(x)\dd x=0,
\qquad\text{and}\qquad
\int_{I_t}q(x)\,b_\infty(x)^2\dd x\neq0.
\end{equation}
Indeed, choose $\rho\in C_c^\infty(I_t)$ with $\int_{I_t}\rho(x)\dd x=1$.  If the second integral vanished for every real-valued test function $q\in C_c^\infty(I_t)$ with zero integral, then for every $\psi\in C_c^\infty(I_t)$ the test function $\psi-(\int_{I_t}\psi(x)\dd x)\rho$ would give
\[
\int_{I_t}\psi(x)\,b_\infty(x)^2\dd x
=\left(\int_{I_t}\psi(x)\dd x\right)\int_{I_t}\rho(x)\,b_\infty(x)^2\dd x.
\]
Thus $b_\infty^2$ would be constant as a distribution on $I_t$, contradicting the previous paragraph.

Extend $q$ by zero to a function in $C_c^\infty(\mathbb R)$, still denoted by $q$, and define the physical observable by
\[
Q(x,y)\coloneqq q(x)
\]
in a neighbourhood of $\overline S_t$.  Since $\supp q\Subset I_t$, this function vanishes near the two cap interfaces and on the caps, and its restriction gives $Q\in C^\infty(\overline S_t)$.  Also
\begin{equation}\label{eq:mean-zero-Q}
\int_{S_t} Q(z)\dd z
=\pi\int_{I_t}q(x)\dd x=0.
\end{equation}
Because $Q$ is independent of $y$ in $R_t$ and vanishes on $S_t\setminus R_t$, orthogonality of the vertical basis gives
\begin{equation}\label{eq:Q-decomposition}
\langle Q u_j,u_j\rangle_{L^2(S_t)}
=\sum_{\ell=1}^\infty\int_{I_t}q(x)|a_{j,\ell}(x)|^2\dd x.
\end{equation}
The identity follows first for finite vertical sums and then by $L^2$ convergence and boundedness of $q$.

We separate the target mode, the lower vertical modes, and the higher vertical modes.  For the target mode, the $L^2$ convergence $b_j\to b_\infty$ gives
\[
\left|\int_{I_t}q(b_j^2-b_\infty^2)\dd x\right|
\leq \|q\|_\infty \|b_j-b_\infty\|_2
\bigl(\|b_j\|_2+\|b_\infty\|_2\bigr)
\to0.
\]
Therefore
\begin{equation}\label{eq:target-limit}
\int_{I_t}q(x)\,|a_{j,n_j}(x)|^2\dd x
=A_j^2\int_{I_t}q(x)\,b_j(x)^2\dd x
\to
\gamma,
\qquad
\gamma\coloneqq A_\infty^2\int_{I_t}q(x)\,b_\infty(x)^2\dd x.
\end{equation}
By \eqref{eq:q-choice} and $A_\infty\geq c>0$, $\gamma\neq0$.

All constants in the following mode estimates may depend on the fixed data $t$ and $q$, and later on the chosen interval $J$, but not on $j$ or $\ell$.

For $\ell<n_j$, put $k_{j,\ell}\coloneqq\sqrt{\lambda_j-\ell^2}$. 
Since $\lambda_j=n_j^2+O(1)$,
\[
k_{j,\ell}^2\geq n_j^2-C-(n_j-1)^2
=2n_j-1-C
\]
for all $\ell<n_j$ and all large $j$.  Thus $k_{j,\ell}\geq c_0\sqrt{n_j}$, in particular $k_{j,\ell}\geq1$.  By \eqref{eq:coeff-ode} and \Cref{lem:oscillation},
\[
\left|\int_{I_t}q(x)\,|a_{j,\ell}(x)|^2\dd x\right|
\leq C_q n_j^{-1/2}\|a_{j,\ell}\|_{L^2(I_t)}^2.
\]
Summing and using \eqref{eq:parseval},
\begin{equation}\label{eq:lower-modes}
\left|\sum_{\ell<n_j}\int_{I_t}q(x)\,|a_{j,\ell}(x)|^2\dd x\right|
\leq C_q n_j^{-1/2}
\sum_{\ell<n_j}\|a_{j,\ell}\|_{L^2(I_t)}^2
\leq C_q n_j^{-1/2}\to0.
\end{equation}

For $\ell>n_j$, put $\alpha_{j,\ell}\coloneqq\sqrt{\ell^2-\lambda_j}$. 
Again $\lambda_j=n_j^2+O(1)$ gives, for all large $j$,
\[
\alpha_{j,\ell}^2\geq (n_j+1)^2-n_j^2-C
=2n_j+1-C,
\]
and hence $\alpha_{j,\ell}\geq c_0\sqrt{n_j}$.  Choose an interval $J\Subset I_t$ with $\supp q\subseteq J$.  By \eqref{eq:coeff-ode}, the coefficient satisfies $a_{j,\ell}''=\alpha_{j,\ell}^2 a_{j,\ell}$.  Therefore \Cref{lem:evanescent} gives, after squaring the estimate and renaming constants,
\[
\|a_{j,\ell}\|_{L^2(J)}^2
\leq C e^{-c\alpha_{j,\ell}}\|a_{j,\ell}\|_{L^2(I_t)}^2
\leq C e^{-c'\sqrt{n_j}}\|a_{j,\ell}\|_{L^2(I_t)}^2.
\]
Consequently,
\begin{align}\label{eq:higher-modes}
\left|\sum_{\ell>n_j}\int_{I_t}q(x)\,|a_{j,\ell}(x)|^2\dd x\right|
&\leq \|q\|_\infty \sum_{\ell>n_j}\|a_{j,\ell}\|_{L^2(J)}^2 \\
&\leq C e^{-c'\sqrt{n_j}}
\sum_{\ell>n_j}\|a_{j,\ell}\|_{L^2(I_t)}^2
\leq C e^{-c'\sqrt{n_j}}\to0.\nonumber
\end{align}
Combining \eqref{eq:Q-decomposition}, \eqref{eq:target-limit}, \eqref{eq:lower-modes}, and \eqref{eq:higher-modes} yields $\langle Q u_j,u_j\rangle_{L^2(S_t)}\to \gamma\neq0$. 
Together with \eqref{eq:mean-zero-Q}, this proves the proposition.
\end{proof}

\begin{remark}\label{rem:fixed-Q}
The observable $Q$ is chosen after passing to the subsequence on which $b_j\to b_\infty$.  Once chosen, it is fixed for all members of that subsequence.  This is exactly what is required to contradict physical equidistribution along that subsequence.
\end{remark}

\section{From a smooth observable to the main theorem}

The preceding sections produce, in each of Hassell's two alternatives, a
smooth physical observable for which the expectations fail to converge to the
Lebesgue average.  To obtain the regional form of physical scarring in
\Cref{thm:main}, we now pass from such an observable to a set.  The
mechanism is elementary: the layer-cake formula expresses the integral of a
bounded real-valued function in terms of the distribution functions of its
superlevel sets, so failure of convergence of the integral must be detected
at some level.  Choosing a regular level ensures that the resulting region
has smooth relative boundary in the interior of the billiard domain.

The following lemma records this reduction in a form convenient for the
eigenfunction application.

For a smooth function $F$ on $\Omega^\circ$, a real number $s$ is called a regular value of $F$ if $dF_z\neq0$ for every $z\in\Omega^\circ$ with $F(z)=s$; equivalently in the present planar setting, $\nabla F(z)\neq0$ at every point of the level set $F^{-1}(s)$.  If $F^{-1}(s)$ is empty, this condition is vacuous.

\begin{lemma}\label{lem:level-set}
Let $\Omega\subseteq\mathbb R^2$ be a bounded Lebesgue-measurable set with $0<|\Omega|<\infty$ and $|\partial\Omega|=0$, let $\nu_j$ be probability measures on $\Omega$, and let $m_\Omega\coloneqq|\Omega|^{-1}\mathbf 1_\Omega\,\dd z$.  Let $Q$ be a bounded real-valued Borel function on $\Omega$ whose restriction to $\Omega^\circ$ is smooth.  If $\int_\Omega Q\dd\nu_j\not\to \int_\Omega Q\dd m_\Omega$, then there is a regular value $s$ of $Q|_{\Omega^\circ}$ such that, for $A_s\coloneqq\{z\in\Omega: Q(z)>s\}$, one has $\nu_j(A_s)\not\to m_\Omega(A_s)$. 
The relative boundary of $A_s\cap\Omega^\circ$ in $\Omega^\circ$ is $\partial_{\Omega^\circ}(A_s\cap\Omega^\circ) =\{Q=s\}\cap\Omega^\circ$, 
and is smooth.  If, in addition, all $\nu_j$ are absolutely continuous with respect to Lebesgue measure, as in the eigenfunction application below, then $A_s$ may be modified on Lebesgue-null subsets of the ambient plane, in particular on boundary pieces, without changing any of the masses.
\end{lemma}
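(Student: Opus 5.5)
The plan is to combine the layer-cake formula with Sard's theorem. By subtracting a constant (note that $\nu_j$ and $m_\Omega$ are both probability measures), we may assume $a<Q<b$ on $\Omega$ for some finite $a<b$. For any probability measure $\nu$ on $\Omega$, the layer-cake formula gives
\[
\int_\Omega Q\dd\nu = a+\int_a^b \nu(\{Q>s\})\dd s .
\]
Put $g_j(s)\coloneqq\nu_j(A_s)-m_\Omega(A_s)$. Then $\int_a^b g_j(s)\dd s=\int_\Omega Q\dd\nu_j-\int_\Omega Q\dd m_\Omega$, and by hypothesis this does not tend to $0$. Also $|g_j|\le 1$.

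Suppose, for contradiction, that $g_j(s)\to0$ for every regular value $s$ of $Q|_{\Omega^\circ}$. Sard's theorem, applied to the smooth function $Q|_{\Omega^\circ}$ on the open set $\Omega^\circ\subseteq\mathbb R^2$, says that the critical values form a Lebesgue-null subset of $\mathbb R$. Hence $g_j\to0$ for almost every $s\in[a,b]$. Dominated convergence, with dominating function $1$ on the bounded interval $[a,b]$, then gives $\int_a^b g_j\to0$, which is a contradiction. So some regular value $s$ has $g_j(s)\not\to0$, which is the required non-convergence.

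We do not need a subsequence in which the failure happens at a common level. The contradiction argument already produces a single fixed $s$ with $\nu_j(A_s)\not\to m_\Omega(A_s)$, and this is exactly the statement.

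For the boundary claim, set $U\coloneqq A_s\cap\Omega^\circ=\{z\in\Omega^\circ:Q(z)>s\}$, which is open by continuity of $Q$ on $\Omega^\circ$.
\begin{itemize}
\item The relative boundary of $U$ in $\Omega^\circ$ is contained in $\{Q=s\}\cap\Omega^\circ$. At a point where $Q>s$ we are in $U$, and at a point where $Q<s$ a whole neighbourhood avoids $U$.
\item Conversely, let $z\in\Omega^\circ$ with $Q(z)=s$. Since $s$ is regular, $\nabla Q(z)\ne0$, so moving from $z$ along $\pm\nabla Q(z)$ gives points of $\Omega^\circ$ with $Q>s$ and with $Q<s$ arbitrarily close to $z$. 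Thus $z$ lies in the relative boundary.
\item By the implicit function theorem, $\{Q=s\}\cap\Omega^\circ$ is a smooth embedded curve, possibly empty.
\end{itemize}

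For the final assertion, suppose each $\nu_j$ is absolutely continuous. Then $\nu_j$ and $m_\Omega$ give zero mass to every Lebesgue-null set. In particular they give zero mass to $\partial\Omega$, since $|\partial\Omega|=0$ by hypothesis. So modifying $A_s$ on a null set leaves all the masses unchanged.

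The only genuinely non-formal step is the measure-zero claim for critical values. It is handled by Sard's theorem for $C^\infty$ maps into $\mathbb R$, which requires smoothness only on the open set $\Omega^\circ$. The boundary $\partial\Omega$ is irrelevant here, because it is Lebesgue-null and the level sets are only considered inside $\Omega^\circ$.
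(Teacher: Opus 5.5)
Your proposal is correct and follows essentially the same route as the paper: the layer-cake identity on a bounding interval, Sard's theorem to show the regular values have full measure, and dominated convergence to reach a contradiction. Your explicit check of the relative-boundary identity and of the null-set modification claim supplies details that the paper covers only by citing the regular-value theorem.
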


\begin{proof}
Choose $M$ so that $|Q(z)|<M$ for every $z\in\Omega$.  For every probability measure $\nu$ on $\Omega$,
\begin{equation}\label{eq:layer-cake-real}
\int_\Omega Q\dd\nu
=-M+\int_{-M}^{M}\nu(\{Q>s\})\dd s.
\end{equation}
This is the standard layer-cake identity for a bounded real-valued random variable.

Suppose, to the contrary, that $\nu_j(\{Q>s\})\to m_\Omega(\{Q>s\})$ for every regular value $s$ of $Q|_{\Omega^\circ}$.  By Sard's theorem \cite{Sard1942}, the set of critical values has Lebesgue measure zero.  Hence the pointwise convergence above holds for almost every $s\in(-M,M)$.  Since all terms are bounded by $1$, dominated convergence in \eqref{eq:layer-cake-real} gives $\int_\Omega Q\dd\nu_j\to \int_\Omega Q\dd m_\Omega$, a contradiction.  Thus some regular value $s$ gives non-convergence of the superlevel-set masses.  The smoothness of the interior level set follows from the regular-value theorem \cite[Ch.~5]{Lee2013}.
\end{proof}

\begin{proof}[Proof of \Cref{thm:main}]
Let $Z_2^{\mathrm{good}}\subseteq Z_2$ be the full-measure subset on which \Cref{prop:hassell-input}(b) holds, and set $\mathcal G\coloneqq Z_1\cup Z_2^{\mathrm{good}}$.
Since $Z_1\cup Z_2=[1,2]$, the set $\mathcal G$ has full measure in $[1,2]$.  Fix $t\in\mathcal G$.
Set $m_{S_t}\coloneqq |S_t|^{-1}\dd z$.

First suppose $t\in Z_1$.  By \Cref{prop:hassell-input}(a), there is a normalized eigenfunction sequence $w_j$ such that
$\langle Q_0 w_j,w_j\rangle_{L^2(S_t)}\to0$ for every $Q_0\in C_c^\infty(S_t^\circ\setminus\overline{R_t})$.  Choose a non-negative function $Q_0$, not identically zero, supported compactly in the interiors of the two caps, and extend $Q_0$ by zero to $S_t$.  Since $Q_0$ is supported away from the cap interfaces and the billiard boundary, this extension is smooth on $\overline S_t$.  Thus $\langle Q_0 w_j,w_j\rangle\to0$.
To obtain real eigenfunctions, write $w_j=p_j+i s_j$ with $p_j,s_j$ real.  Since the Dirichlet Laplacian is real, both $p_j$ and $s_j$ are eigenfunctions.  Choose $h_j\in\{p_j,s_j\}$ with $\|h_j\|_2\geq 1/\sqrt2$ and set $u_j\coloneqq h_j/\|h_j\|_2$.  Then $u_j$ is a normalized real eigenfunction and, since $Q_0\geq0$,
\[
\langle Q_0 u_j,u_j\rangle
\leq 2\langle Q_0 w_j,w_j\rangle
\to0.
\]
On the other hand, $\frac1{|S_t|}\int_{S_t} Q_0(z)\dd z>0$.  Thus physical equidistribution fails.  To obtain the mean-zero observable asserted in the theorem, set $Q\coloneqq Q_0-\frac1{|S_t|}\int_{S_t} Q_0(z)\dd z$.  Then $\int_{S_t} Q=0$ and $\langle Q u_j,u_j\rangle\to-\frac1{|S_t|}\int_{S_t} Q_0(z)\dd z\neq0$.
Since $\int_{S_t} Q\,\dd m_{S_t}=0$, applying \Cref{lem:level-set} to the probability measures $|u_j|^2\dd z$ and to $m_{S_t}$ gives a physical region $A\subseteq S_t$ with smooth relative boundary in the interior of the stadium such that
\[
\int_A |u_j|^2\dd z\not\to\frac{|A|}{|S_t|}.
\]
This proves the theorem in the $Z_1$ case.

Now suppose $t\in Z_2^{\mathrm{good}}$.  Then there are eigenfunctions $u_j$ and integers $n_j\to\infty$ satisfying the hypotheses of \Cref{prop:fixed-overlap}.  Applying that proposition, after passing to a subsequence we obtain a smooth real-valued mean-zero observable $Q$ with $\langle Q u_j,u_j\rangle\to \gamma\neq0$.
Since $\int_{S_t} Q\dd z=0$, this contradicts physical equidistribution.  Applying \Cref{lem:level-set} to the probability measures $|u_j|^2\dd z$ and to $m_{S_t}$ gives a physical region $A\subseteq S_t$ with smooth relative boundary in the interior of the stadium such that
\[
\int_A |u_j|^2\dd z\not\to \frac{|A|}{|S_t|}.
\]
This proves the theorem in the $Z_2$ case as well.
\end{proof}

\begin{remark}\label{rem:not-full-rectangle-localization}
The argument proves the physical-space statement but not the stronger assertion $\int_{R_t}|u_j|^2\to1$.
In the bounded-window case, localization can be distributed across a finite cluster in phase space.  \Cref{prop:fixed-overlap} avoids this by using the exact vertical Fourier coefficient selected by the fixed bouncing-ball overlap.
\end{remark}

\section*{Acknowledgements}

H.L. was supported by the National Natural Science Foundation of China (12501487), by the China Scholarship Council and IBS-R029-C4.
X.L. was supported by the Excellent Young Talents Program (Overseas) of the National Natural Science Foundation of China.

\section*{Declaration on the use of AI}

The authors used generative AI tools to assist in discussing proof strategies, checking proofs, and improving exposition.

\bibliographystyle{abbrv}
\bibliography{stadium}

\end{document}